\theoremstyle{plain}
\newtheorem{theorem}{Theorem}[section]
\newtheorem{prop}[theorem]{Proposition}
\theoremstyle{definition}
\newtheorem{defn}{Definition}[section]
\newtheorem{remark}{Remark}[section]
\begin{document}
	
	\title[A complete characterization of polygonal Radon planes]{A complete characterization of polygonal Radon planes}
	\author[  Kalidas Mandal, Debmalya Sain and Kallol Paul ]{Kalidas Mandal, Debmalya Sain and Kallol Paul  }

	\newcommand{\acr}{\newline\indent}
	
	\address[Mandal]{Department of Mathematics\\ Jadavpur University\\ Kolkata 700032\\ West Bengal\\ INDIA}
	\email{kalidas.mandal14@gmail.com}
	
	\address[Sain]{Department of Mathematics\\ Indian Institute of Science\\ Bengaluru 560012\\ Karnataka \\India\\ }
	\email{saindebmalya@gmail.com}
	
	\address[Paul]{Department of Mathematics\\ Jadavpur University\\ Kolkata 700032\\ West Bengal\\ INDIA}
	\email{kalloldada@gmail.com}

	\thanks{ The first author would like to thank CSIR, Govt. of India for the financial support in the form of junior research fellowship. The research of Dr. Debmalya Sain is sponsored by Dr. D. S. Kothari Post-doctoral Fellowship. Dr. Debmalya Sain would also like to acknowledge the warm hospitality of his dearest friend Arijeet Roy Chowdhury and his wife Mandira Saha. The research of Prof Kallol Paul  is supported by project MATRICS  of DST, Govt. of India. } 
	
	\subjclass[2010]{Primary 46B20, Secondary 52A21}
	\keywords{Radon plane; Birkhoff-James orthogonality; polygonal Banach space}

\begin{abstract}
We study the structure of the unit sphere of polygonal Radon planes from a geometric point of view. In particular, we prove that a $ 2 $-dimensional real polygonal Banach space $ \mathbb{X} $ \emph{cannot} be a Radon plane if the number of vertices of its unit sphere is $ 4n, $ for some $ n \in \mathbb{N}. $ We next obtain a complete characterization of polygonal Radon planes in terms of a tractable geometric concept introduced in this article. It follows from our characterization that every regular polygon with $ 4n+2 $ vertices, where $ n \in \mathbb{N}, $ is the unit sphere of a Radon plane. We further give example of a family of Radon planes for which the unit spheres are hexagons, but not regular ones. 
\end{abstract}

\maketitle
\section{Introduction.} 

The purpose of this short note is to study Radon planes, i.e., two-dimensional real Banach spaces where Birkhoff-James orthogonality is symmetric, whose unit spheres are polygons. Indeed, we obtain a complete characterization of polygonal Radon planes in terms of a novel geometric concept which is rather straightforward to verify. Let us first establish the relevant notations and the terminologies to be used in the paper. \\

Let $(\mathbb{X},\|.\|)$ be a two-dimensional real Banach space. Given any two elements $ u,v \in \mathbb{X}, $ let $ \overline{uv}=\{(1-t)u+tv~:t \in [0,1]\} $ denote the closed straight line segment joining $ u $ and $ v. $ Let $B_{\mathbb{X}} = \{x \in \mathbb{X} \colon \|x\| \leq 1\}$ and $S_{\mathbb{X}} = \{x \in \mathbb{X} \colon \|x\|=1\}$ be the unit ball and the unit sphere of $\mathbb{X}$, respectively. It is perhaps obvious that without loss of generality, we may assume $ \mathbb{X} $ to be $ \mathbb{R}^{2}, $ equipped with some appropriate norm. It is in this sense that we identify the zero vector with the origin $ (0,0) $ in $ \mathbb{R}^{2}. $ For any two elements  $x,y \in {\mathbb{X}}$, $x$ is said to be Birkhoff-James orthogonal to $y$ \cite{B}, written as $x \perp_B y$, if $ \|x+\lambda y\|\geq\|x\|$ for all $ \lambda \in \mathbb{R}.$ It is easy to observe that Birkhoff-James orthogonality is homogeneous, i.e., given any $ x,y \in \mathbb{X} $ and any two scalars $ \alpha,\beta \in \mathbb{R}, $ we have, $ x \perp_{B} y $ implies that $ \alpha x \perp_{B} \beta y. $  However, we would like to note that Birkhoff-James orthogonality is not necessarily symmetric in $ \mathbb{X}, $ i.e., given any two elements  $ x,y $ in $  \mathbb{X},$ $ x \perp_{B} y $ does not necessarily imply that $ y \perp_{B} x. $ For an element $x\in \mathbb{X},$ we say that $x$ is left symmetric \cite{S} if $x\perp_B y$ implies $y\perp_B x$ for any $y\in \mathbb{X}.$  $ \mathbb{X} $ is said to be a Radon plane \cite{R} if Birkhoff-James orthogonality is symmetric in $ \mathbb{X}. $ It is well-known \cite{J} that for a Banach space $ \mathbb{X} $ of dimension greater than or equal to $ 3, $ Birkhoff-James orthogonality is symmetric in $ \mathbb{X} $ if and only if the norm is induced by an inner product. However, it follows from the seminal works of James \cite{J} and Day \cite{D} that there exist two-dimensional real Banach spaces for which Birkhoff-James orthogonality is symmetric but the norm is not induced by an inner product. In this paper we study the symmetry of Birkhoff-James orthogonality for a special class of two-dimensional real Banach spaces. $ \mathbb{X} $ is said to be polygonal if $ B_{\mathbb{X}} $ contains only finitely many extreme points, or, equivalently, if $ S_{\mathbb{X}} $ is a polygon. We say that $ S_{\mathbb{X}} $ is a regular polygon if all the edges of $ S_{\mathbb{X}} $ have the same length with respect to the usual metric on $ \mathbb{R}^{2} $ and all interior angles are equal in measure.  In this paper, our purpose is to  explore the connection between $ \mathbb{X} $ being a Radon plane and the number of vertices of $ S_{\mathbb{X}}. $ In particular, we prove that if $ \mathbb{X} $ is a two-dimensional real Banach space such that $ S_{\mathbb{X}} $ is a polygon with $ 4n $ vertices, where $ n \in \mathbb{N}, $ then $ \mathbb{X} $ is never a  Radon plane. We next obtain a complete characterization of polygonal Radon planes in terms of a geometric condition which is very easy to verify. Let us mention the relevant definitions in this context.

\begin{defn} [\textbf{Translated vertex property of an edge}]
	Let $\mathbb{X}$ be a two-dimensional real polygonal Banach space. An edge $L$ of $ S_{\mathbb{X}} $ is said to satisfy translated vertex property (TVP) if  $ 0 \in x + L $ for some $x \in \mathbb{X} $ and $ x + L$ intersects $ S_{\mathbb{X}}$ at a pair of vertices $ \pm v $ of $ S_{\mathbb{X}}. $ In that case, $ \pm v $ are called the translated vertices of $ S_{\mathbb{X}} $ corresponding to the edge $ L. $   Equivalently, an edge $L$ satisfies TVP if and only if the kernel of the extreme supporting functional $f$ corresponding to $L$ (see Definition $1.3$ of \cite{SPBB}) contains a pair of vertices of $S_{\mathbb{X}}.$
	The space $\mathbb{X}$ is said to satisfy TVP if every edge of $S_{\mathbb{X}}$ satisfies TVP.
\end{defn}

\begin{defn} [\textbf{Translated edge  property of two adjacent edges}]
	Let $\mathbb{X}$ be a two-dimensional real polygonal Banach space and let $L, M$ be any two adjacent edges of $ S_{\mathbb{X}} $. We say that the consecutive edges $L$ and $M$ satisfy translated edge property (TEP) if and only if \\
	(i) both $L$ and $M$ satisfy TVP with translated vertices $\pm v$ and $\pm w$ respectively.\\
	(ii) either $\overline{vw} $ or $\overline{v(-w)}$ is an edge with the translated vertices $ \pm x$.\\
	The space $\mathbb{X}$ is said to satisfy TEP if every pair of adjacent edges of $S_{\mathbb{X}}$ satisfies the same.
\end{defn} 

The importance of these two geometric concepts, introduced in this article, becomes evident in course of our study. Indeed, the following result is the major highlight of our present study: \emph{a two-dimensional real polygonal Banach space $ \mathbb{X} $ is a Radon plane if and only if $ \mathbb{X} $ satisfies TEP.} Moreover, it is easy to deduce from this geometric characterization of polygonal Radon planes that every regular polygon with $ 4n+2 $ vertices, where $ n \in \mathbb{N}, $ is the unit sphere of a Radon plane. We further study the geometry of Radon planes whose unit spheres are polygons but not necessarily regular ones. Indeed, we give a concrete example of a family of Radon planes whose unit spheres are irregular hexagons.\\

The notion of smoothness in a Banach space is intimately connected with the present study. An element $x\in S_{\mathbb{X}}$ is said to be a smooth point if there is a unique hyperplane $H$ supporting $B_{\mathbb{X}}$ at $x.$ The space $\mathbb{X}$ is said to be smooth if every point of $ S_{\mathbb{X}} $ is a smooth point. We would like to remark that since in the context of our present study $ \mathbb{X} $ is two-dimensional, any supporting hyperplane to $ B_{\mathbb{X}} $  is simply a straight line that touches $ B_{\mathbb{X}} $ at some point(s) so that $ B_{\mathbb{X}} $ lies entirely in one of the closed half-planes determined by the said straight line. It is immediate that unit sphere of any two-dimensional polygonal real Banach space consists of extreme points and smooth points only, the extreme points being the vertices of the polygon.  At every smooth point on the unit sphere, the only supporting hyperplane is the edge of the polygon that contains the corresponding point. However, at every vertex of the unit sphere, there are infinitely many supporting hyperplanes that lie inside a cone generated by the two edges of the polygon meeting at that particular vertex of the polygon. Indeed, this simple geometric idea is at the heart of our approach towards completely characterizing Radon planes whose unit spheres are polygons.

\section{Main results}
Let us begin with two simple observations that relate smooth points and extreme points of the unit ball of a two-dimensional real polygonal Banach space and describe the Birkhoff-James orthogonality set of any unit vector. We omit the proofs as it is rather obvious for the first proposition and for the second proposition, the proof follows from  \cite[Th. 2.1]{SPM}.
\begin{prop}\label{prop}
	Let $\mathbb{X}$ be a two-dimensional real polygonal Banach space. Let $x\in S_{\mathbb{X}}$. Then $x$ is an extreme point of $B_{\mathbb{X}}$ if and only if $x$ is not a smooth point.

\end{prop}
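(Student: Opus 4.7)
The plan is to argue both directions by distinguishing the two possible locations of a point $x \in S_{\mathbb{X}}$: either $x$ is a vertex of the polygon or $x$ lies in the relative interior of some edge. Since $S_{\mathbb{X}}$ is a polygon, the extreme points of $B_{\mathbb{X}}$ are exactly the vertices of $S_{\mathbb{X}}$, so the statement reduces to showing that vertices are precisely the non-smooth points on $S_{\mathbb{X}}$.

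For the forward direction, suppose $x$ is an extreme point, i.e., a vertex. Then there exist two distinct edges $L_1, L_2$ of $S_{\mathbb{X}}$ meeting at $x$. Let $\ell_1, \ell_2$ be the straight lines in $\mathbb{R}^2$ containing $L_1$ and $L_2$ respectively. Since $B_{\mathbb{X}}$ is convex and $L_1, L_2 \subset S_{\mathbb{X}}$, each of $\ell_1$ and $\ell_2$ supports $B_{\mathbb{X}}$ at $x$, and these lines are distinct because $L_1$ and $L_2$ are not collinear (otherwise $x$ would not be a vertex). Hence $x$ admits at least two distinct supporting hyperplanes, so $x$ is not a smooth point.

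For the converse, suppose $x$ is not an extreme point of $B_{\mathbb{X}}$. Then $x$ lies in the relative interior of a unique edge $L$ of $S_{\mathbb{X}}$. Let $\ell$ be the line containing $L$; clearly $\ell$ is a supporting line of $B_{\mathbb{X}}$ at $x$. To show smoothness, I would verify that $\ell$ is the only supporting line at $x$: if $\ell'$ were any other supporting line at $x$, then $\ell'$ would intersect $\ell$ transversally at the interior point $x$ of $L$, forcing points of $L$ arbitrarily close to $x$ on both sides to lie in opposite open half-planes determined by $\ell'$. Since $L \subset B_{\mathbb{X}}$, this contradicts the fact that $B_{\mathbb{X}}$ must lie entirely in one closed half-plane bounded by $\ell'$. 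Thus $\ell$ is the unique supporting line at $x$, and $x$ is a smooth point.

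The main ingredients are entirely elementary convex-geometric facts about polygons: the identification of extreme points with vertices, and the transversality argument ruling out non-edge supporting lines at interior points of edges. There is no serious obstacle; the only thing to be careful about is separating the cases cleanly and invoking convexity of $B_{\mathbb{X}}$ at the right moment to force a candidate supporting line to contain the entire edge $L$.
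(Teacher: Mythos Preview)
Your argument is correct and is exactly the natural elementary one. The paper itself omits the proof entirely, remarking that it is ``rather obvious,'' so there is no approach to compare against; your vertex/edge case split with the transversality argument at interior edge points is precisely what anyone filling in the details would write.
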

\begin{prop}\label{prop-1}
	Let $ \mathbb{X} $ be a two-dimensional real Banach space and $ x \in S_{\mathbb{X}}.$ Then the set $  x^{\bot} =  \{ y \in \mathbb{X} : x \bot_B y \} = K \cup -K$, where $K$ is a normal cone in $ \mathbb{X}.$ 
\end{prop}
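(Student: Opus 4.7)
The plan is to use the standard duality characterization of Birkhoff-James orthogonality: by the Hahn-Banach theorem, for $x \in S_{\mathbb{X}}$ one has $x \perp_B y$ if and only if there exists a norm-one linear functional $f \in \mathbb{X}^*$ satisfying $f(x) = 1$ and $f(y) = 0$. Denote by $J(x) = \{ f \in S_{\mathbb{X}^*} : f(x) = 1 \}$ the set of supporting functionals at $x$. Then
$$
x^{\bot} \; = \; \bigcup_{f \in J(x)} \ker f.
$$
Since $\mathbb{X}$ is two-dimensional, $J(x)$ is the intersection of $B_{\mathbb{X}^*}$ with the affine hyperplane $\{f : f(x) = 1\}$, hence a closed line segment in $S_{\mathbb{X}^*}$ (a single point precisely when $x$ is smooth).

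Next, I would parametrize $J(x)$ by a continuous path $t \mapsto f_t$, $t \in [0,1]$, where $f_0$ and $f_1$ are the endpoint functionals (possibly equal). For each $t$, the kernel $\ker f_t$ is a one-dimensional subspace. Choose unit vectors $u_0, u_1 \in \mathbb{X}$ spanning $\ker f_0$ and $\ker f_1$ respectively, with a consistent orientation, and set
$$
K \; = \; \{ \alpha u_0 + \beta u_1 : \alpha, \beta \geq 0 \}.
$$
This $K$ is a closed convex (normal) cone in $\mathbb{X}$. The claim is that $x^{\bot} = K \cup (-K)$. The inclusion $x^{\bot} \supseteq K \cup (-K)$ is the easier direction: any $y \in K$ equals $\alpha u_0 + \beta u_1$ with $\alpha, \beta \geq 0$, and a direct computation produces a convex combination of $f_0, f_1$ (normalized if necessary) in $J(x)$ that annihilates $y$, so $x \perp_B y$; the same holds for $-K$ by homogeneity of $\perp_B$.

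For the reverse inclusion, any $y \in x^\bot$ lies in $\ker f_t$ for some $t \in [0,1]$, and one must verify that the family of lines $\{\ker f_t : t \in [0,1]\}$ sweeps out exactly $K \cup (-K)$ with no gaps and no excess. This is the main technical point: it rests on the fact that in two dimensions the map $f \mapsto \ker f$ is a homeomorphism onto its image in the projective line, so $t \mapsto \ker f_t$ is a continuous, monotone rotation from the line through $u_0$ to the line through $u_1$, and the union of these lines with consistent orientation is precisely the double cone $K \cup (-K)$. As the authors indicate, the argument is carried out in detail in Theorem 2.1 of \cite{SPM}.
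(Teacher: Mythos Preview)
The paper gives no proof of this proposition at all; it simply states that ``the proof follows from \cite[Th.~2.1]{SPM}.'' Your proposal sketches the duality argument that underlies that cited result and ends by deferring to the same reference, so you are in complete agreement with the paper's treatment. One small remark: the phrase ``with a consistent orientation'' is doing real work in your sketch---the signs of $u_0,u_1$ must be chosen so that $f_0(u_1)$ and $f_1(u_0)$ have opposite signs (equivalently, so that $x$ lies in the complementary double cone), otherwise the ``easy'' inclusion fails---but since you explicitly hand off the details to \cite{SPM}, this is not a gap.
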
 
Note that a subset $ K $ of $ \mathbb{X} $ is said to be a normal cone in $ \mathbb{X} $ if \\
$ (i)~ K + K \subset K, (ii)~ \alpha K \subset K $ for all $ \alpha \geq 0 $ and $ (iii)~ K \cap (-K) = \{\theta\}. $\\

Our next proposition is useful in studying polygonal radon planes.
\begin{prop}\label{prop-2}
	Let $\mathbb{X}$ be a two-dimensional real polygonal Banach space such that $\mathbb{X}$ is a Radon plane.  Let $L$ be an edge of $S_{\mathbb{X}}$ and $v$ be an extreme point of $S_{\mathbb{X}}$ such that $L\perp_B v$. Then $v\perp_B u$ if and only if $u\in \pm L$.
	
\end{prop}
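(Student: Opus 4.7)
The plan is to combine the cone structure of $v^\perp$ from Proposition \ref{prop-1} with the symmetry of $\perp_B$ in a Radon plane. The ``if'' direction is immediate: if $u \in L$, then $u \perp_B v$ by the hypothesis $L \perp_B v$, so the Radon property yields $v \perp_B u$; the case $u \in -L$ follows by homogeneity of $\perp_B$.

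For the converse, I would first note that any relative interior point $x$ of the edge $L$ is a smooth point of $S_{\mathbb{X}}$ by Proposition \ref{prop}, and the only directions orthogonal to such a smooth point are those parallel to $L$. Hence the hypothesis $L \perp_B v$ forces $v$ to be parallel to $L$. Next, Proposition \ref{prop-1} gives $v^\perp = K \cup (-K)$ for some normal cone $K$ in $\mathbb{R}^2$, and the Radon symmetry applied to $L \perp_B v$ shows $L \subset v^\perp$; after possibly relabeling $K$, I may assume $L \subset K$. Since $K$ is a closed convex cone, $K \cap S_{\mathbb{X}}$ is a closed connected arc of $S_{\mathbb{X}}$ containing $L$, and by homogeneity of $\perp_B$ it suffices to prove this arc equals $L$.

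Assume for contradiction that $K \cap S_{\mathbb{X}}$ strictly contains $L$. Then the arc extends past some endpoint of $L$, say $v_1$, onto the edge $L_1$ adjacent to $L$ at $v_1$, and by connectedness I can choose an interior (hence smooth) point $u' \in L_1 \cap K$. Then $v \perp_B u'$, and the Radon hypothesis gives $u' \perp_B v$; smoothness of $u'$ forces $v$ to be parallel to $L_1$, so $L \parallel L_1$. But $L$ and $L_1$ are distinct adjacent edges of $S_{\mathbb{X}}$ sharing the vertex $v_1$, so they cannot be parallel without being collinear, which would contradict $v_1$ being an extreme point of $B_{\mathbb{X}}$. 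Hence $K \cap S_{\mathbb{X}} = L$, and by the $\pm$-symmetry of $v^\perp$ the unit vectors orthogonal to $v$ are precisely those in $\pm L$. The most delicate step is ruling out that the arc $K \cap S_{\mathbb{X}}$ extends beyond $L$; this combines the smoothness of interior points of adjacent edges, the Radon symmetry to flip the orthogonality, and the geometric non-collinearity of adjacent polygonal edges meeting at an extreme point.
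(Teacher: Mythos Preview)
Your proof is correct but takes a genuinely different route from the paper's. The paper argues the converse direction via supporting functionals: from $L\perp_B v$ one gets the extreme functional $f\in S_{\mathbb{X}^*}$ with $f(L)=1$ and $f(v)=0$; from $v\perp_B u$ and the Radon property one gets $u\perp_B v$, hence a functional $g\in S_{\mathbb{X}^*}$ with $g(u)=1$ and $g(v)=0$. Since $\dim\mathbb{X}=2$, the common kernel $\{v\}$ forces $f=\pm g$, so $f(u)=\pm 1$ and $u\in\pm L$. Your argument instead works on the primal side: you use Proposition~\ref{prop-1} to write $v^\perp=K\cup(-K)$, place $L$ inside $K$, and then exclude any extension of the arc $K\cap S_{\mathbb{X}}$ past an endpoint of $L$ by a parallelism contradiction on adjacent edges (smoothness at an interior point of the neighbouring edge plus Radon symmetry would force $v$ to be parallel to two non-collinear edges). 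The paper's route is shorter and exploits the two-dimensional duality trick directly; your route is more geometric, makes explicit use of the polygonal structure and the cone description of $v^\perp$, and foreshadows the edge-by-edge reasoning used later in Theorems~\ref{4n-gon} and~\ref{Radon}. Both approaches implicitly treat $u$ as a unit vector (and extend by homogeneity), which matches the paper's own usage.
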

\begin{proof}
	Let $L$ be an edge of $S_{\mathbb{X}}$ such that $L\perp_B v$, where $v$ be an extreme point of $S_{\mathbb{X}}$. Since $\mathbb{X}$ is a Radon plane, if $u\in \pm L$ then $v\perp_B u.$\\
	
	Conversely, let $L\perp_B v$ and $v\perp_B u.$ We want to show that $u\in \pm L.$
	Since $L\perp_B v$, there exist a linear functional $f\in S_{\mathbb{X}^*}$ such that $f(x)=1$ for all $x\in L$ and $f(v)=0.$ Again, since $\mathbb{X}$ is a Radon plane $ v\perp_B u$, we have $u\perp_B v.$ Then there exist a linear functional $g\in S_{\mathbb{X}^*}$ such that $g(u)=1$ and $g(v)=0.$ Therefore, $v\in ker g$. We also have $v\in ker f$. Since the dimension of $\mathbb{X}$ is two, we have $ker f= ker g.$ This shows that $f=\lambda g$. As $f, g \in S_{\mathbb{X}^*}$, we have $f=\pm g.$ Therefore, $ f(u) = \pm 1 $ and so  $u\in \pm L.$
	
\end{proof}

Our next result illustrates the importance of the extreme points of the unit ball in studying Radon planes. 
\begin{theorem}\label{extreme}
	Let $\mathbb{X}$ be a  two-dimensional real  polygonal Banach space. Then $\mathbb{X}$ is a Radon plane if and only if Birkhoff-James orthogonality is left symmetric at each extreme point of $B_{\mathbb{X}}$. 
	
\end{theorem}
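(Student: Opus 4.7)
The forward direction is immediate: in a Radon plane Birkhoff--James orthogonality is symmetric everywhere, hence in particular left symmetric at every extreme point.

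For the converse, assume left symmetry holds at every extreme point of $B_{\mathbb{X}}$, and fix $x,y \in \mathbb{X}$ with $x \perp_B y$; by homogeneity I may take $\|x\|=\|y\|=1$, and the goal is $y \perp_B x$. By Proposition~\ref{prop}, $x$ is either extreme or smooth. If $x$ is extreme, left symmetry at $x$ immediately gives $y \perp_B x$, so I focus on the case where $x$ lies in the relative interior of some edge $L$ of $S_{\mathbb{X}}$ with extreme endpoints $u_1, u_2$. Let $f \in S_{\mathbb{X}^*}$ be the unique supporting functional at $x$; then $f \equiv 1$ on $L$ (so $f(u_1)=f(u_2)=1$) and $f(y)=0$. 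The estimate $\|u_i+\lambda y\| \geq |f(u_i+\lambda y)| = 1 = \|u_i\|$ shows $u_1 \perp_B y$ and $u_2 \perp_B y$, and applying left symmetry at the extreme points $u_1, u_2$ yields $y \perp_B u_1$ and $y \perp_B u_2$.

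To finish, I invoke Proposition~\ref{prop-1} to write $y^\perp = K \cup (-K)$ for a normal cone $K$, and claim that $u_1$ and $u_2$ lie in the same half, say both in $K$. Granting this, writing $x = (1-s)u_1 + s u_2$ for some $s \in [0,1]$ and using the cone axioms gives $x \in K \subseteq y^\perp$, i.e., $y \perp_B x$. To prove the claim, observe that both $y$ and $u_2-u_1$ lie in the one-dimensional subspace $\ker f$, so $y = \alpha(u_2-u_1)$ for some $\alpha \neq 0$; after possibly interchanging $u_1$ and $u_2$, I may assume $\alpha>0$. Suppose, toward a contradiction, that $u_1 \in K$ and $u_2 \in -K$. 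Then $-u_2 \in K$, and the additivity and positive homogeneity of $K$ yield $u_1-u_2 = -\alpha^{-1}y \in K$, and hence $-y \in K \subseteq y^\perp$. This means $y \perp_B (-y)$; but setting $\lambda = 1$ in $\|y + \lambda(-y)\| \geq \|y\|$ forces $\|y\|=0$, contradicting $\|y\|=1$.

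The forward implication and the extreme-$x$ case are essentially tautological; the main obstacle is the final step that rules out $u_1$ and $u_2$ straddling the two halves of $y^\perp$. The decisive geometric input there is that $y$ is parallel to the edge $L$, which converts any such opposite-side placement of $u_1,u_2$ into the impossible conclusion $-y \in y^\perp$.
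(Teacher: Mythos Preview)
Your proof is correct and follows essentially the same route as the paper's: handle the extreme case directly, and for smooth $x$ pass to the edge endpoints via the supporting functional, apply left symmetry there, and then invoke the normal-cone description $y^{\perp}=K\cup(-K)$ from Proposition~\ref{prop-1}. The paper simply asserts at the final step that $y\perp_B (1-t)v_1+tv_2$ follows from $v_1,v_2\in y^{\perp}$; your version is more careful in explicitly ruling out the possibility that $u_1$ and $u_2$ sit in opposite cones, using that $u_2-u_1$ is parallel to $y$, thereby justifying a detail the paper leaves implicit.
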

\begin{proof}
	Let $\mathbb{X}$ be a Radon plane. Then Birkhoff-James orthogonality is symmetric at each point of $S_{\mathbb{X}}$. Therefore, Birkhoff-James orthogonality is left symmetric at each extreme point of $B_{\mathbb{X}}.$\\
	
	Conversely, suppose Birkhoff-James orthogonality is left  symmetric at each extreme point of $B_{\mathbb{X}}.$ Let $x, y \in S_{\mathbb{X}}$ and $x\perp_B y.$ We show that $y\perp_B x.$ If $x$ is an extreme point of $B_{\mathbb{X}}$ then we are done. In a real  two-dimensional polygonal space, we know that each point $ S_{\mathbb{X}}$  is either an extreme point of $B_{\mathbb{X}}$ or a smooth point. So we consider  $x$ to be a smooth point.   Clearly $x$ can be written as $x= (1-t)v_1 +tv_2$ for some extreme points $v_1, v_2$ of $B_{\mathbb{X}}.$ As $x\perp_B y$ , there exists a linear functional $f\in S_{\mathbb{X}^*}$ such that $f(x)=1$ and $f(y)=0.$ Then we have, $(1-t)f(v_1)+tf(v_2)=f(x)=1\Rightarrow f(v_1)=f(v_2)=1.$  This shows that $v_1\perp_B y$ and $v_2\perp_B y.$ Again, by the given condition, we have $y\perp_B v_1$ and $y\perp_B v_2.$ Now, by Proposition \ref{prop-1}, the set $ x^{\bot} = \{ v \in \mathbb{X} : y \bot_B v \} = K \cup (-K),$ where $K$ is a normal cone in  $\mathbb{X}$, so it follows  that $y\perp_B (1-t)v_1 +tv_2=x.$ Thus $x \bot_B y $ implies that $ y \bot_B x$ so that Birkhoff-James orthogonality is symmetric. This completes the proof.
\end{proof}

If $ \mathbb{X} $ is a two-dimensional real polygonal Banach space then the number of vertices of $ S_{\mathbb{X}} $ is either $ 4n $ or $ 4n+2, $ for some $ n \in \mathbb{N}. $ The major aim of the present paper is to show that in the first case $ \mathbb{X} $ is \emph{never} a Radon plane, while in the second case, $ \mathbb{X} $ is a Radon plane provided some additional conditions are satisfied. We begin by proving the following result that addresses our first query.
\begin{theorem}\label{4n-gon}
	Let $\mathbb{X}$ be a two-dimensional real Banach space such that $ S_{\mathbb{X}}$ is a polygon with $4n$ vertices, where $n\in\mathbb{N}$. Then $\mathbb{X}$ is not a Radon plane.
\end{theorem}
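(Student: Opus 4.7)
The plan is to assume for contradiction that $\mathbb{X}$ is a Radon plane with $S_\mathbb{X}$ having $N=4n$ vertices, to extract a bijection between edges and translated vertices from the Radon hypothesis, to argue it must be a cyclic shift, and then to derive a congruence with no integer solution. Label the vertices $v_1,\ldots,v_N$ counterclockwise with $v_{j+N/2}=-v_j$, and write $L_j=\overline{v_j v_{j+1}}$.

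The first step is to show every edge satisfies TVP. Fix an edge $L$ with extreme supporting functional $f$ and a smooth $x\in L$, so the orthogonality set of $x$ is $\ker f$; let $\pm u$ denote $\ker f\cap S_\mathbb{X}$. If $u$ were smooth, lying on some edge $M$ with supporting functional $g$, the Radon hypothesis applied to $x\perp_B u$ would give $u\perp_B x$ and hence $g(x)=0$; density of smooth points in $L$ forces $L\subseteq \ker g$, contradicting the fact that $\ker g$ passes through the origin while $L$ lies on the affine line $\{f=1\}$ which does not. Hence $u$ is a vertex, so $L$ has translated vertex pair $\pm u$.

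The second step is to extract the shift. TVP for each edge $L_j$ yields a pair of translated vertices; let $\phi(j)\in\{1,\ldots,N\}$ denote the index of the representative $v_{\phi(j)}$ lying in the direction of the signed chord $v_{j+1}-v_j$. Then the position angle $\theta_{\phi(j)}$ equals the signed chord angle $\alpha_j$. Since both $(\alpha_j)_{j=1}^{N}$ and $(\theta_k)_{k=1}^{N}$ are strictly monotone counterclockwise cyclic sequences sweeping through $2\pi$, and TVP forces the two $N$-element sets of angles to coincide, $\phi$ is a strict order-preserving bijection of the cycle, hence a cyclic shift: $\phi(j)\equiv j+c\pmod N$ for a constant $c$.

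The third step is the congruence. By Proposition \ref{prop-2}, $v_j^\perp\cap S_\mathbb{X}$ is a single edge-pair $\pm L_{\phi^{-1}(j)}$. On the other hand, the orthogonality cone at $v_j$ is the union of kernels of supporting functionals at $v_j$, bounded by $\ker f_{L_{j-1}}$ and $\ker f_{L_j}$, whose intersections with $S_\mathbb{X}$ are the translated vertices $\pm v_{\phi(j-1)}$ and $\pm v_{\phi(j)}$. With the signed representatives $v_{\phi(j-1)}=v_{j-1+c}$ and $v_{\phi(j)}=v_{j+c}$ adjacent, the arc of $S_\mathbb{X}$ they bound is the single edge $L_{j-1+c}$; matching with $L_{\phi^{-1}(j)}$ modulo the antipodal pairing gives $j-c\equiv j-1+c\pmod{N/2}$, i.e., $2c\equiv 1\pmod{2n}$. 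This is impossible since $2c-1$ is odd while $2n$ is even, and the contradiction finishes the argument.

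The main obstacle I foresee is the second step, the upgrade from a pairwise bijection to a cyclic shift: the translated vertex is intrinsically defined only as an antipodal pair, and a careful choice of signed representative, together with the strict monotone rotation of chord directions against vertex directions, is needed to force the correspondence to be order-preserving. Once that structural fact is in place, the geometric identification of the orthogonality arc at $v_j$ and the arithmetic congruence fall out quickly.
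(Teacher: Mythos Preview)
Your proof is correct and complete, but it takes a genuinely different route from the paper's own argument.

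The paper proceeds by a quadrant-counting argument: it fixes one edge $L$ with translated vertex $v$, then picks a supporting functional $g$ at $v$ killing the midpoint of $L$, and uses the two lines $\ker f$ and $\ker g$ to cut the plane into four closed regions $S_1,\ldots,S_4$. The Radon hypothesis is then used edge-by-edge to show that the edges in $S_1$ inject (via their translated vertices) into the vertices of $S_2$ and vice versa, forcing the four regions to carry the same number $k_0$ of vertices off $\ker f$; together with the two vertices $\pm v$ on $\ker f$ this yields $4k_0+2$ vertices, contradicting $4n$.

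Your argument is more combinatorial: you globalize the TVP into a single map $\phi$ from edges to vertices, upgrade it to a cyclic shift $\phi(j)\equiv j+c$ using the monotone rotation of chord directions, and then compare two descriptions of $v_j^\perp\cap S_{\mathbb{X}}$ (one from Proposition~\ref{prop-2}, one from the edge-pair at $v_j$) to obtain the parity obstruction $2c\equiv 1\pmod{2n}$. This buys a clean structural picture---the edge-to-vertex correspondence is a rigid rotation of the index set---at the cost of invoking Proposition~\ref{prop-2} and a slightly more delicate sign/orientation bookkeeping in Step~2 (which you correctly flag). The paper's approach avoids that bookkeeping entirely and needs only the single TVP instance at $L$, but it does not expose the cyclic-shift structure that your proof makes explicit. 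Both arguments of course pivot on the same first move: the Radon property forces the kernel of each edge functional to hit $S_{\mathbb{X}}$ at a vertex rather than a smooth point.
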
   
\begin{proof}
	If possible suppose $\mathbb{X}$ is a Radon plane. Without loss of generality, we take an edge $L$ of $ S_{\mathbb{X}}.$ Let  $f\in S_{\mathbb{X}^*}$ be the  extreme supporting linear functional corresponding to the edge $L$. Then $f(L) = 1$ and $f(v) = 0$ for some  $v \in S_{\mathbb{X}}.$   Clearly  $L\perp_B v$. Since $\mathbb{X}$ is a Radon plane, we have $v\perp_B L.$ Since $L$ is an edge, Birkhoff-James  orthogonality at $v$ is not right unique and so  it follows from \cite[Th. 4.1]{Ja} that $v$ is not a smooth point of $\mathbb{X}.$ Then using Proposition \ref{prop}, we have $v$ is an extreme point of $B_{\mathbb{X}}$ and so  $v$ is a  vertex of $S_{\mathbb{X}}.$  Now, let $u$ be the midpoint of $L$. Since $v\perp_B u$, there exist a supporting functional $g\in S_{\mathbb{X}^*}$ at $v$ such that $g(v)=1$ and $g(u)=0.$ The situation is illustrated in following figure 1:\\
	
	\begin{figure}[ht]
		\centering 
		\includegraphics[width=0.5\linewidth]{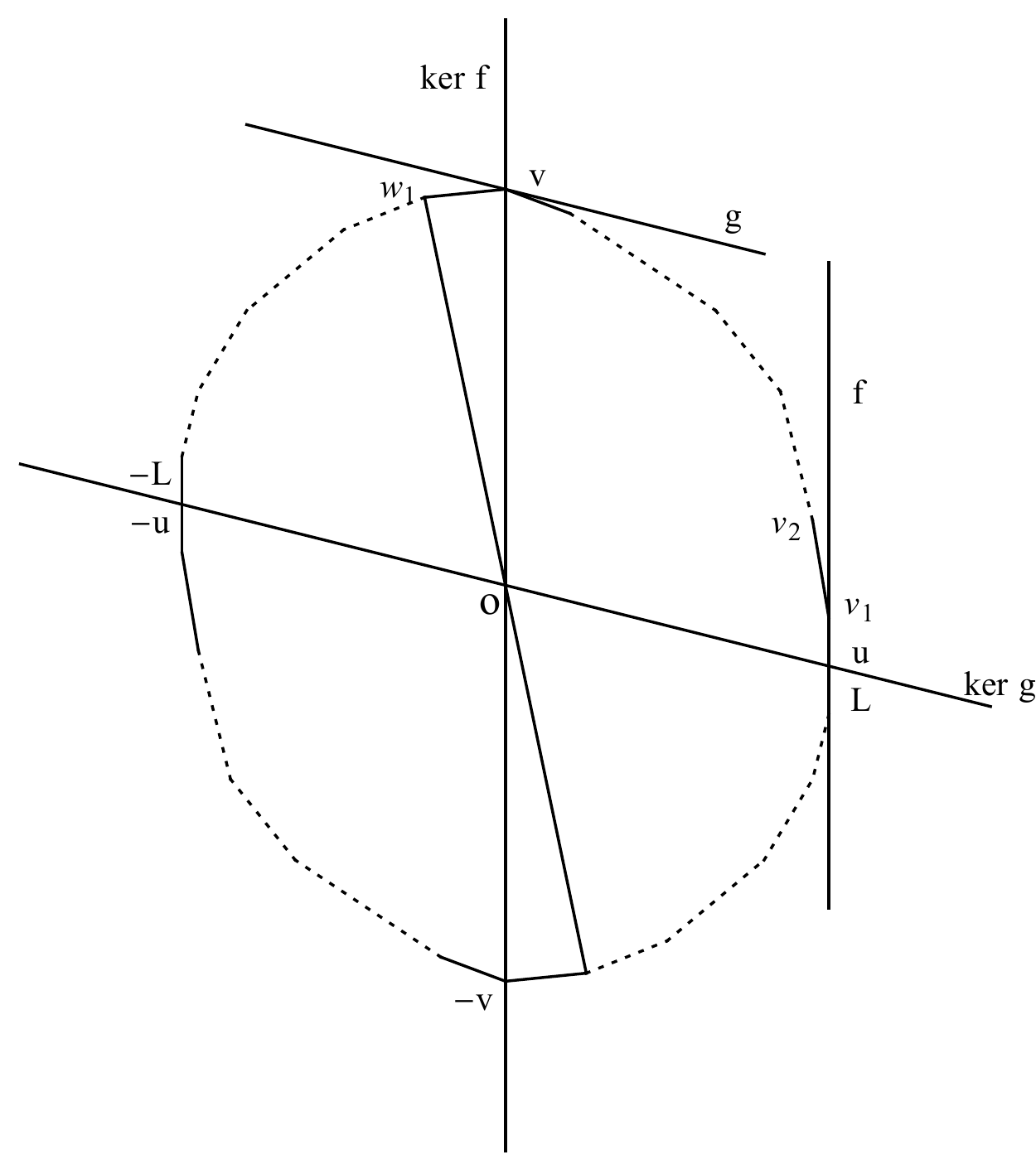}
		\caption{}
		\label{Figure 1}
	\end{figure}

	\noindent Then the plane $\mathbb{X}$ is divided  into the four closed sets: 
	$$ S_1=\{x\in \mathbb{X}: f(x)\geq 0, g(x)\geq 0\}, S_2=\{x\in \mathbb{X}: f(x)\leq 0, g(x)\geq 0\},$$
	$$ S_3=\{x\in \mathbb{X}: f(x)\leq 0, g(x)\leq 0\}, S_4=\{x\in \mathbb{X}: f(x)\geq 0, g(x)\leq 0\}.$$
	We note that $ker f$ contains two vertices of $S_{\mathbb{X}}$, namely, $ \pm v$ and $ker g$ contains two smooth points of $S_{\mathbb{X}},$ namely, $ \pm u.$ We claim that each closed set $S_i, i=1,2,3,4$ (excluding $ker f$) contains exactly the same number of vertices of $S_{\mathbb{X}}.$ Let $k_0$ denote the number of vertices of $S_{\mathbb{X}}$ in $S_1$ (excluding $\pm v$). To establish our claim, let us consider two adjacent vertices  $v_1$ and $v_2$ (may not be equal to $v$) of $S_{\mathbb{X}}$ in $S_1$ such that $v_1 \in S_{\mathbb{X}} \cap L.$ Then because of convexity of $ B_{\mathbb{X}},$ $v_2$ can be written as $v_2=(1-t)v_1 +t(\alpha v)$ for some  $\alpha>0$ and $t\in (0,1).$ This shows that $f(v_2)<f(v_1).$ Similarly, $v_2$ can written as $v_2=(1-s)v +s (\beta u)$ for some  $\beta>0$ and $s\in (0,1).$ This shows that $g(v_2)<g(v)$. Proceeding in the same way we can show that $g(v_1)<g(v_2).$ Now, $\overline {v_1v_2}$ is an edge of  $S_{\mathbb{X}}$ lying  in $S_1$ and $\overline{v_1v_2}\perp_B {v_2-v_1}.$ Let $w_1 =  \frac{v_2-v_1}{\|v_2-v_1\|}. $  Since $\mathbb{X}$ is a Radon plane, we have $ w_1 \perp_B \overline{v_1v_2}.$ As before Birkhoff-James orthogonality is not right unique at $w_1$ and so $w_1$ is a vertex of $S_{\mathbb{X}}.$ Since $f(w_1)<0$ and $g(w_1)>0$, we have  $w_1$ lies in $S_2$ (excluding $ker f$ and $ker g$). 
	Continuing this process for every edge in $S_1$, we may and do conclude that corresponding to every edge of $S_{\mathbb{X}}$ lying entirely in $S_1$, there is a vertex of $ S_{\mathbb{X}} $ in $S_2$ (excluding $ker f$). Therefore, the number of vertices of $ S_{\mathbb{X}} $ in $S_2$ (excluding $ker f$ ) is greater than or equal to $ k_0. $ Similarly, corresponding to every edge of $S_{\mathbb{X}}$ lying entirely in $S_2$, there is a vertex of $ S_{\mathbb{X}} $ in $S_1$ (excluding $ker f$). So the number of vertices of $S_{\mathbb{X}}$ in $S_1$ (excluding $ker f$) is greater than or equal to the number of vertices of $S_{\mathbb{X}}$ in $S_2$ (excluding $ker f$). This proves that $ S_{\mathbb{X}} $ has the same number of vertices in $S_1$ and  $S_2$ (excluding $ker f$). Moreover, by symmetry of $ S_{\mathbb{X}} $ about the origin, the number of vertices of $ S_{\mathbb{X}} $ in $S_1$ and $S_3$ ($S_2$ and $S_4$) must be equal. Therefore, the number of vertices of $ S_{\mathbb{X}} $ in each $S_i, i=1,2,3,4$ (excluding $ker f$) is  $ k_0. $ This completes the proof of our claim. Now, we count the total number of vertices of $S_{\mathbb{X}}$. Clearly, this turns out to be $4k_0+2$ ($k_0$ in each closed set $S_i, i=1,2,3,4$ except $ker f$ and 2 in  $ker f$). However, this clearly contradicts the fact that $S_{\mathbb{X}}$ is a polygon with $4n $ vertices. This completes the proof.
\end{proof}

We next obtain a complete characterization of polygonal Radon planes in terms of the translated edge property (TEP). 
\begin{theorem}\label{Radon}
	Let $\mathbb{X}$ be a  two-dimensional real polygonal Banach space. Then $\mathbb{X}$ is a Radon plane if and only if  $\mathbb{X} $ satisfies TEP.	
\end{theorem}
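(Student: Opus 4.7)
The plan is to prove the two implications separately, reducing the reverse direction via Theorem \ref{extreme} to showing left-symmetry at every vertex of $B_\mathbb{X}$.

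For the forward direction ($\mathbb{X}$ Radon $\Rightarrow$ TEP), I take adjacent edges $L, M$ at common vertex $p$. First I derive TVP for $L$: choose $\pm v_L \in \ker f_L \cap S_\mathbb{X}$. Since $L \perp_B v_L$ by construction, the Radon hypothesis gives $v_L \perp_B L$, whence $v_L \perp_B x$ for each $x \in L$. Because $L$ is a proper segment, Birkhoff-James orthogonality at $v_L$ is not right-unique, so by \cite[Th. 4.1]{Ja} and Proposition \ref{prop}, $v_L$ is a vertex of $B_\mathbb{X}$; the same argument works for $M$. For TEP(ii), Proposition \ref{prop-2} gives that the unit vectors in $(v_L)^\perp$ are exactly $\pm L$, so the extreme rays of the Birkhoff cone at $v_L$ pass through the endpoints of $L = \overline{p q_1}$, and the extreme supporting functionals at $v_L$ have kernels $\mathbb{R} p$ and $\mathbb{R} q_1$. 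Consequently, one adjacent edge $L^*$ of $v_L$ satisfies $\ker f_{L^*} = \mathbb{R} p$, and by TVP its translated vertex is $\pm p$. Performing the same construction at $v_M$ produces an edge $M^*$ with kernel $\mathbb{R} p$ and translated vertex $\pm p$. Since $L^*$ and $M^*$ share their kernels, $L^* = \pm M^*$; the two cases yield $L^* = \overline{v_L v_M}$ or $L^* = \overline{v_L (-v_M)}$, establishing TEP(ii) with translated vertex $\pm p$.

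For the reverse direction ($\mathbb{X}$ satisfies TEP $\Rightarrow$ $\mathbb{X}$ Radon), let $v$ be a vertex with adjacent edges $E', E''$. TEP supplies translated vertices $\pm v_{E'}, \pm v_{E''}$ and an edge $F$ of $S_\mathbb{X}$ with endpoints $v_{E'}$ and $\pm v_{E''}$. By Proposition \ref{prop-1}, $v^\perp$ is the double cone bounded by $\ker f_{E'}$ and $\ker f_{E''}$; since $v_{E'}$ and $\pm v_{E''}$ are adjacent vertices joined by the single edge $F$, the intersection $v^\perp \cap S_\mathbb{X}$ equals $\pm F$. Left-symmetry at $v$ thus amounts to showing $u \perp_B v$ for each $u \in F$, which at smooth points reduces to $v \in \ker f_F$. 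The critical step is to argue that the translated vertex of $F$ is $\pm v$. TVP establishes a bijection between edge pairs and vertex pairs of $S_\mathbb{X}$ (distinct edge pairs have distinct kernels), so there is a unique edge $F_v$ with $\ker f_{F_v} = \mathbb{R} v$; I then argue that TEP forces $F_v = F$, for instance by showing that the endpoints of $F_v$ must lie on the boundary rays of the cone $v^\perp$ and hence coincide with the endpoints of $F$. Once $v \in \ker f_F$, the orthogonality $u \perp_B v$ follows for smooth interior $u \in F$, and for the endpoints (which are vertices), $f_F$ is still a supporting functional at $u$ with $f_F(v) = 0$, again giving $u \perp_B v$; by central symmetry the same holds for $u \in -F$.

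The central obstacle is proving $F_v = F$ in the reverse direction using TEP alone, without invoking Radon. I expect the argument to mirror the forward direction's use of Proposition \ref{prop-2}: at the vertex $v_{E'}$, the Birkhoff cone of $v_{E'}$ is bounded by the kernels of the supporting functionals of its two adjacent edges, one of which must have kernel $\mathbb{R} v$ (since $v$ is an endpoint of $E'$); the adjacent edge on that side, which turns out to be $F$ itself, then has translated vertex $\pm v$ by TVP. Making this mirror argument rigorous requires a careful analysis of the combinatorial structure of TEP around the vertex $v_{E'}$ in the absence of the Radon hypothesis, and constitutes the main technical difficulty of the proof.
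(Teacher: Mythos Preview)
Your forward direction is correct and arguably cleaner than the paper's. The paper works directly with the cone $p^\perp$ (Proposition~\ref{prop-1}) and uses Radon symmetry to push orthogonality back onto the segment $\overline{v_L v_M}$, then rules out by a short contradiction the possibility that this segment fails to be an edge. You instead route through Proposition~\ref{prop-2} to locate, at each of $v_L$ and $v_M$, an adjacent edge whose supporting functional has kernel $\mathbb{R}p$, and then match these two edges. Both approaches are valid; yours makes the role of Proposition~\ref{prop-2} more explicit.

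The reverse direction, however, rests on a misreading of the hypothesis. In the definition of TEP, condition~(ii) reads ``either $\overline{vw}$ or $\overline{v(-w)}$ is an edge \emph{with the translated vertices $\pm x$},'' and $x$ here is the common vertex of $L$ and $M$. (You yourself use this in the forward direction when you write ``establishing TEP(ii) with translated vertex $\pm p$.'') So once you assume TEP and form the edge $F$ joining $v_{E'}$ to $\pm v_{E''}$, the hypothesis already hands you that the translated vertex of $F$ is $\pm v$, i.e.\ $\ker f_F = \mathbb{R}v$. This is precisely what the paper invokes (``Then by the TVP, we get $\overline{vw}\perp_B x$''), and with it your argument finishes at once: $v^\perp\cap S_{\mathbb{X}} = \pm F$, and every $u \in \pm F$ satisfies $u \perp_B v$ via the functional $f_F$.

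Your ``central obstacle''---proving $F_v = F$ by a bijection count or by mirroring the forward direction---is therefore unnecessary. That is fortunate, because the mirror argument you sketch has a real gap: you assert that one of the edges adjacent to $v_{E'}$ has kernel $\mathbb{R}v$ ``since $v$ is an endpoint of $E'$,'' but this implication is exactly the content of Proposition~\ref{prop-2}, whose proof uses the Radon hypothesis you no longer have. Without the translated-vertex clause in TEP(ii), there is no a~priori reason the kernels of the two edges meeting at $v_{E'}$ should pass through the endpoints of $E'$.
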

\begin{proof} We first prove the necessary part. 	
	Let $L$ be an edge of $S_{\mathbb{X}}.$ Let  $f\in S_{\mathbb{X}^*}$ be the  extreme supporting linear functional corresponding to the edge $L$. Then $f(L) = 1$ and $f(v) = 0$ for some  $v \in S_{\mathbb{X}}.$   Clearly  $L\perp_B v$. Since $\mathbb{X}$ is a Radon plane, we have $v\perp_B L.$ Since $L$ is an edge, Birkhoff-James orthogonality is not right unique and so  it follows from \cite[Th. 4.1]{Ja} that $v$ is not a smooth point of $\mathbb{X}.$ Again, it follows from Proposition \ref{prop} that $v$ is an extreme point of $B_{\mathbb{X}}$ and so $v$ is a  vertex of $S_{\mathbb{X}}.$ This shows that every edge satisfies the TVP.  Let 
	$L$ and $M$  be any two adjacent edges of $S_{\mathbb{X}}$ with a common vertex $u$. Also, let $f$ and $g$ be the extreme supporting functionals corresponding to the edges $L$ and $M$, respectively so that  $f(u)=g(u)=1. $ Also by the TVP, we get $ker f\cap S_{\mathbb{X}}=\{\pm v\}$ and  $ker g\cap S_{\mathbb{X}}=\{\pm w\}$ for some extreme points $v$ and $w$ of $B_{\mathbb{X}}.$ Figure 2, given bellow, illustrates the situation.
	
		\begin{figure}[ht]
		\centering 
		\includegraphics[width=0.5\linewidth]{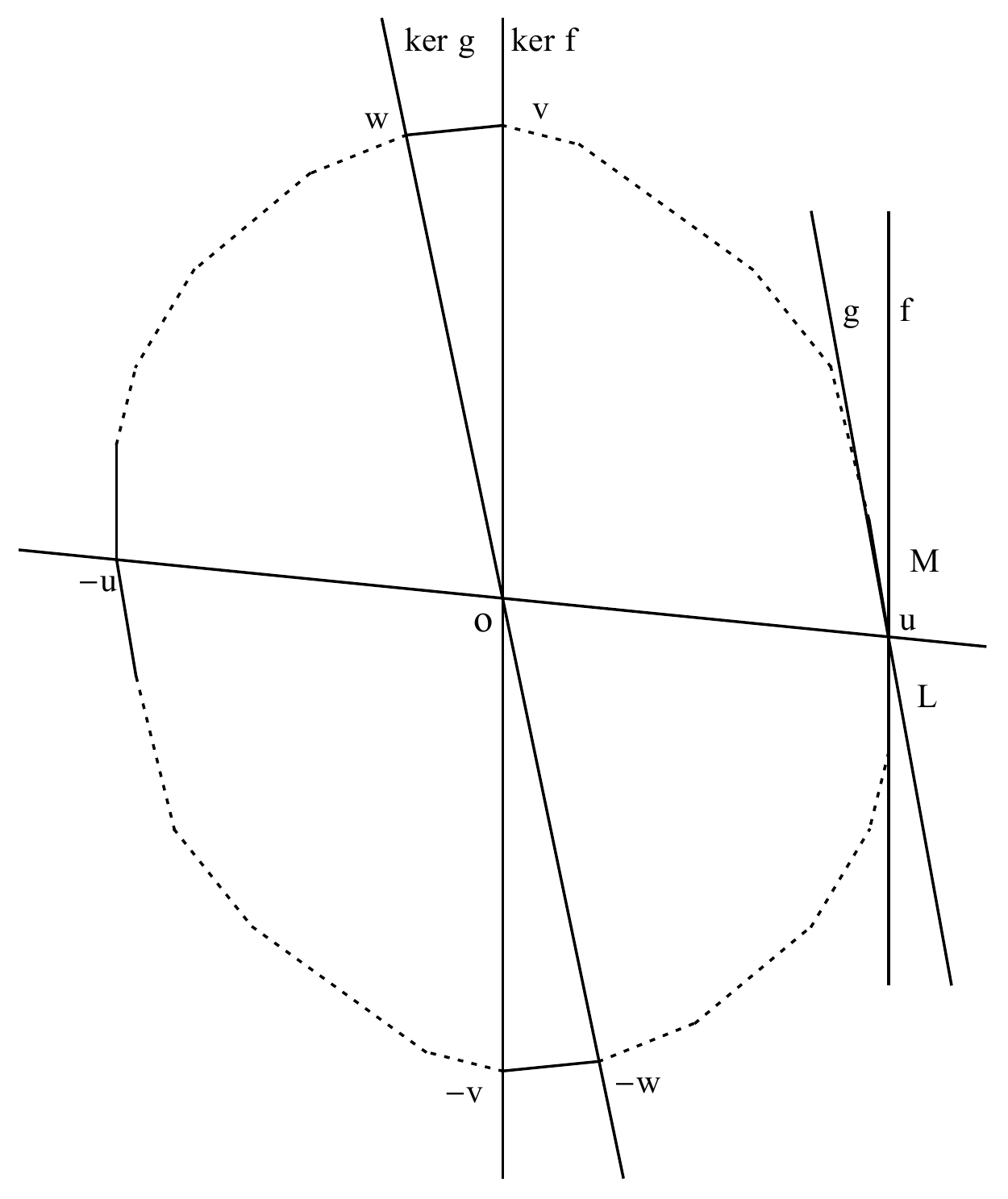}
		\caption{}
		\label{Figure 2}
	\end{figure}
	\noindent We show that $ \overline {vw}$ or $ \overline {v(-w)}$ is an edge of $S_{\mathbb{X}}.$ Clearly $ L \bot_B v, M \bot_B w$ and because of symmetricity of Birkhoff-James orthogonality we get $ v \bot_B L, w \bot_B M.$ Now $u,v,w $ are vertices with the property that $ u \bot_B v $ and $ u \bot_B w$.  Let $y_s =  (1-s)v + sw $ and $ z_s = (1-s)v - sw, $ where $ s \in (0,1).$ We claim that  $ u \bot_B y_s $  for all $ s \in (0,1) $ or   $u \bot_B z_s $ for all $ s \in (0,1).$ It follows from Proposition \ref{prop-1}, the set $ u^{\bot} = \{ x \in \mathbb{X} : u \bot_B x \} = K \cup (-K),$ where $K$ is a normal cone in  $\mathbb{X}$ and so the claim is established. We assume that $ u \bot_B y_s $  for all $ s \in (0,1) .$ Then $ y_s \bot_B u $ for all $ s \in (0,1),$ i.e., $ \overline{vw} \bot_B u.$  Clearly, $ \| y_s \| \leq 1$  for all $ s \in (0,1) .$ If $ \| y_s\| = 1 $ for all $ s \in (0,1) $ then $ \overline{vw}$ is an edge and we are done. 
	If possible let $\overline{vw}$ is not an edge then we can find vertices $v_1,w_1$, not necessarily unequal, so that $ \overline{vv_1} $ and $ \overline{ww_1}$ are edges and proceeding as before we can conclude that  $ \overline{vv_1} \bot_B u,  \overline{ww_1} \bot_B u. $ This is not possible and so $\overline{vw}$ is an edge. This completes the proof of necessary part.\\

	We next prove the sufficient  part using Theorem \ref{extreme}.  So it is sufficient to show that  $x\perp_B y$ implies $y\perp_B x,$   whenever  $ x $ is an extreme point of $ B_{\mathbb{X}}$ and $y \in S_{\mathbb{X}}.$  Let $L$ and $M$ be the two edges of $S_{\mathbb{X}}$ containing $x$.  Let $f,g\in S_{\mathbb{X}^*}$ be the extreme supporting  linear functionals corresponding to edges $L$ and $M$  such that $ker f\cap S_{\mathbb{X}}=\{\pm v\}$ and  $ker g\cap S_{\mathbb{X}}=\{\pm w\}.$  Then $ \overline {vw}$ or $ \overline {v(-w)}$ is an edge of $S_{\mathbb{X}}.$ Without loss of generality we may assume that $ \overline {vw}$ is an edge of $S_{\mathbb{X}}.$ Then by the TVP, we get  $\overline {vw}\perp_B x.$  We want to show that $y\in\overline {vw}.$ Following the proof of \cite[Th. 2.2]{SPBB} we note that every supporting functional at $x$ is a convex combination of $f$ and $g$. As $ x \bot_B y $ so there is some $t \in [0,1] $ such that $ [(1-t)f + tg] (x) = 1 $ and $ [(1-t)f + tg](y) = 0.$ This implies that $ f(y)g(y) < 0 $ and so either $ f(y) > 0, g(y) < 0$ or $ f(y) < 0, g(y) > 0.$ As $ \|y\| =1 $ we can conclude that either $ y \in   \overline {vw} $ or $ -y \in   \overline {vw}. $ Thus we get $ y \bot_B x.$ This completes the proof. 	
\end{proof}

As an application of the above theorem, it is easy to observe that regular polygons with $ 4n+2 $ number of vertices, where $ n \in \mathbb{N}, $ are the unit spheres of Radon planes.

\begin{theorem}\label{4n+2-gon}
	Let $\mathbb{X}$ be a two-dimensional real Banach space such that $S_{\mathbb{X}}$ is a regular polygon with $4n+2$ vertices, where $n\in \mathbb{N}$. Then $\mathbb{X}$ is a Radon plane.
\end{theorem}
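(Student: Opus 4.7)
The plan is to invoke Theorem \ref{Radon} and verify that the translated edge property (TEP) holds for a regular polygon with $4n+2$ vertices. Since TEP requires first establishing TVP for every edge and then showing a compatibility condition between consecutive edges, the work splits naturally into two steps.

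First I would parametrize the vertices. Place the regular $(4n+2)$-gon so that its vertices sit at
\[
v_k = \bigl(\cos\theta_k,\sin\theta_k\bigr), \qquad \theta_k=\frac{2\pi k}{4n+2}=\frac{\pi k}{2n+1}, \quad k=0,1,\ldots,4n+1,
\]
in the Euclidean plane (this is legitimate because TVP and TEP are purely statements about which vertices lie on certain lines, which is affine-geometric data independent of the norm). Note that $v_{k+2n+1}=-v_k$, so antipodal pairs of vertices are present, as they must be for any centrally symmetric polygon.

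Next I would verify TVP for an arbitrary edge $L=\overline{v_k v_{k+1}}$. Using the sum-to-product identities, the edge direction is a positive multiple of $(-\sin\phi,\cos\phi)$ with $\phi=\frac{\pi(2k+1)}{4n+2}$, so the line through the origin parallel to $L$ makes angle $\phi+\pi/2=\frac{\pi(k+n+1)}{2n+1}$ with the $x$-axis. This is exactly the angular position of the vertex $v_{k+n+1}$ (and of its antipode $v_{k+n+1+2n+1}=-v_{k+n+1}$). Hence the kernel of the extreme supporting functional at $L$ meets $S_{\mathbb{X}}$ at the antipodal pair $\pm v_{k+n+1}$, so $L$ satisfies TVP with translated vertices $\pm v_{k+n+1}$. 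The key arithmetic point here is that $2n+1$ is odd, which is exactly why a regular $4n$-gon would fail this step and a regular $(4n+2)$-gon succeeds; this is consistent with Theorem \ref{4n-gon}.

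Finally, TEP is immediate from the indexing established above. For two adjacent edges $L=\overline{v_k v_{k+1}}$ and $M=\overline{v_{k+1} v_{k+2}}$, the translated vertex pairs are $\pm v_{k+n+1}$ and $\pm v_{k+n+2}$ respectively, and these are consecutive vertices of $S_{\mathbb{X}}$, so $\overline{v_{k+n+1}v_{k+n+2}}$ is an edge of $S_{\mathbb{X}}$. Thus every pair of adjacent edges satisfies TEP, so $\mathbb{X}$ satisfies TEP and is therefore a Radon plane by Theorem \ref{Radon}. The main obstacle is genuinely just the TVP computation in the second step; everything else is bookkeeping with indices modulo $4n+2$, and the whole argument is essentially a reflection of the rotational symmetry by $\frac{2\pi}{4n+2}$ of the unit sphere.
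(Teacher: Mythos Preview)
Your approach is correct and essentially identical to the paper's: both parametrize the vertices of the regular $(4n+2)$-gon explicitly, verify TEP by a direct angular computation, and then invoke Theorem~\ref{Radon}. One small point to add: condition~(ii) of TEP requires not merely that $\overline{vw}$ be an edge but that its translated vertices be the common vertex $\pm x$ of $L$ and $M$ (this is exactly what the sufficiency direction of Theorem~\ref{Radon} uses), so in your notation you should also note that the edge $\overline{v_{k+n+1}v_{k+n+2}}$ has translated vertices $\pm v_{(k+n+1)+n+1}=\pm v_{k+2n+2}=\mp v_{k+1}$, i.e.\ $\pm v_{k+1}$, which is the common vertex of $L$ and $M$; this follows from the very same TVP formula you derived, so it is a one-line addition.
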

\begin{proof}
	We show that $\mathbb{X}$ satisfies the TEP. Let $L$ and $M$ be two adjacent edges  of $S_{\mathbb{X}}$ with common vertex $v_1.$ Then $-L$ is also an edge of $S_{\mathbb{X}}$, parallel to $L$. We choose the $ X $-axis to be the line through the origin that bisects both $ L $ and $ -L. $ Clearly, there is no vertex of $S_{\mathbb{X}}$ on the $X$-axis. We know that for a regular polygon with $2n$  vertices, the line of symmetries are the following straight lines:\\
	
	\noindent(i) The straight line passing through the midpoints of any two parallel edges of the polygon.\\
	(ii) The straight line passing through any two opposite vertices $v_i$ and $-v_i$ of the polygon.
	
	\begin{figure}[ht]
\centering 
\includegraphics[width=0.5\linewidth]{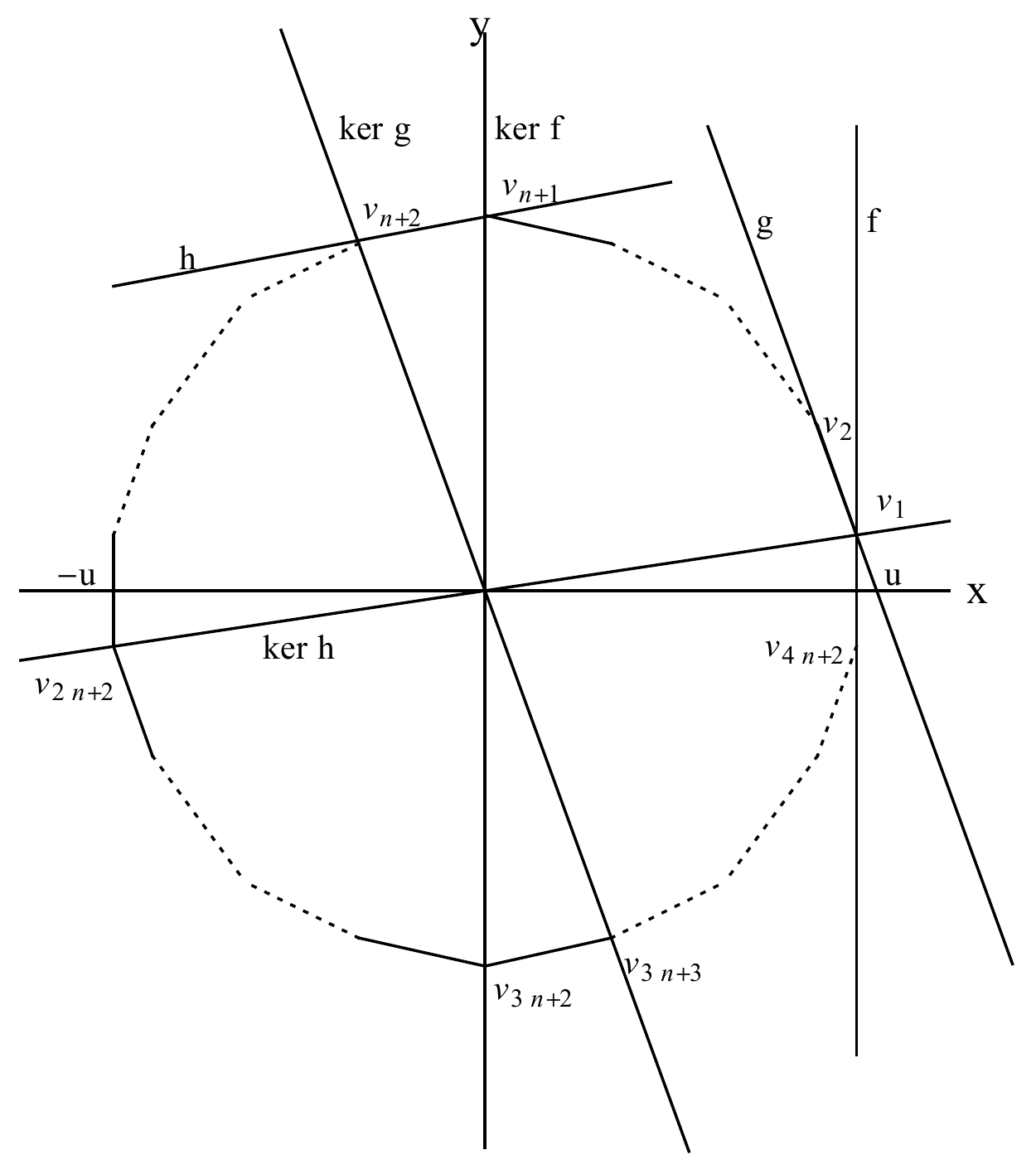}
\caption{}
\label{Figure 3}
\end{figure}

	We choose the $ Y $-axis to be the straight line through the origin, which is parallel to $L$ and $-L.$  Because of regularity of $S_{\mathbb{X}}$ it is easy to see that both the $X$ and $Y$-axes are line of symmetry. Let the $Y$-axis meets $S_{\mathbb{X}}$ at two points $ \pm w = \pm (0, \beta), $ where $ \beta \in \mathbb{R}. $  The two axes divide the plane into four quadrants. By symmetry of $S_{\mathbb{X}}$ about the two axes, each quadrant (excluding the two axes) contains exactly same number of vertices. Since there is no vertex of $S_{\mathbb{X}}$ on the $X$-axis and the number of vertices of $S_{\mathbb{X}}$ is $4n+2$, we conclude that $ \pm w $ are two vertices of $S_{\mathbb{X}}.$ Let  the vertex $v_1$ be  in the first quadrant. The other vertices of $ S_{\mathbb{X}} $ are chosen anticlockwise and are denoted by $v_2, v_3,.......,v_{4n+2}$, where $v_j=(cos\frac{(2j-1)\pi}{4n+2},sin\frac{(2j-1)\pi}{4n+2}), j=1,2,....,4n+2.$ We observe that the following  holds:\\ 
	\noindent (1) The other vertex on $L$ is $v_{4n+2}$.\\
	(2) $v_i$ and $v_{i+1}$ are adjacent, $i=1,2,.....,4n+1$. \\
	(3) $w=v_{n+1}$ and $-w=v_{3n+2}$.\\
	(4) The vertices of $S_{\mathbb{X}}$ on $-L$ are $v_{2n+1}$ and $v_{2n+2}$ in the third and fourth quadrant, respectively.

\noindent Without loss of generality we take the two adjacent edges $L=\overline{v_1 v_{4n+2}}$ and $M=\overline{v_1v_2}$ of $S_{\mathbb{X}}.$ Let $f,g\in S_{\mathbb{X}^*}$ be the extreme supporting  linear functionals at $v_1$ corresponding to edges $L$ and $M$ respectively such that $f(v_1)=g(v_1)=1$. It is easy to see that  $ker f\cap S_{\mathbb{X}}=\{v_{n+1},v_{3n+2}\}$ and  $ker g\cap S_{\mathbb{X}}=\{v_{n+2},v_{3n+3}\}.$  Then $ \overline {v_{n+1}v_{n+2}}$ and $ \overline {v_{3n+2}v_{3n+3}}$ are the edges of $S_{\mathbb{X}}$ and $\overline {v_{n+1}v_{n+2}}=- \overline {v_{3n+2}v_{3n+3}}.$ Now, let $h\in S_{\mathbb{X}^*}$ be the extreme supporting  linear functional corresponding to edge $\overline {v_{n+1}v_{n+2}}$ such that $h(x)=1$, where $x\in \overline {v_{n+1}v_{n+2}}.$ It is easy to verify that $ker h\cap S_{\mathbb{X}}=\{v_1,v_{2n+2}\}$ and $v_1=-v_{2n+2}.$ Thus the edges $L$ and $M$ satisfy the TEP. So using Theorem \ref{Radon}, we conclude that $\mathbb{X}$ is a Radon plane.
	
\end{proof}

 Combining Theorem \ref{4n-gon} and Theorem \ref{4n+2-gon}, we have the following complete characterization of Radon planes having regular polygons as unit spheres.
 \begin{theorem}\label{characterization}
 Let $\mathbb{X}$ be a two-dimensional real Banach space such that $S_{\mathbb{X}}$
 is a regular polygon. Then $\mathbb{X}$ is a Radon plane if and only if the number of vertices of $S_{\mathbb{X}}$ is $4n+2$, where $n\in \mathbb{N}$.	
 \end{theorem}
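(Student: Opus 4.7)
The plan is to observe that this theorem is essentially a packaging of the two previous theorems, together with the basic fact that the unit sphere of any finite-dimensional real normed space is centrally symmetric about the origin. I would not introduce new geometric arguments; instead, I would simply dispatch both directions of the equivalence by citation of the results already proved.

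First, I would remark that if $S_{\mathbb{X}}$ is a polygon, then it is centrally symmetric, since $x \in S_{\mathbb{X}}$ if and only if $-x \in S_{\mathbb{X}}$. Consequently, vertices come in antipodal pairs, so the total number of vertices of $S_{\mathbb{X}}$ is even. Writing this even number as $2m$ with $m \in \mathbb{N}$, either $m$ is even, in which case the number of vertices has the form $4n$, or $m$ is odd, in which case the number of vertices has the form $4n+2$ for some $n \in \mathbb{N}$. (This dichotomy was already noted in the paragraph preceding Theorem \ref{4n-gon}.)

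For the ``only if'' direction, I would assume that $\mathbb{X}$ is a Radon plane and apply the contrapositive of Theorem \ref{4n-gon}: since a polygonal Banach space with a $4n$-gon as its unit sphere is \emph{never} a Radon plane, the number of vertices of $S_{\mathbb{X}}$ cannot be $4n$, and the dichotomy above forces it to be of the form $4n+2$. For the ``if'' direction, I would appeal directly to Theorem \ref{4n+2-gon}, which states precisely that any two-dimensional real Banach space whose unit sphere is a regular polygon with $4n+2$ vertices is a Radon plane.

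There is no real obstacle here; the content is entirely in the two earlier theorems, and the only thing to be careful about is stating the trivial parity dichotomy cleanly so that the two cases are jointly exhaustive. The proof should be at most a few lines long.
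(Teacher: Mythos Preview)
Your proposal is correct and matches the paper's approach exactly: the paper simply states that Theorem~\ref{characterization} follows by combining Theorem~\ref{4n-gon} and Theorem~\ref{4n+2-gon}, with no further argument. Your additional remark about the parity dichotomy merely makes explicit what the paper already noted informally before Theorem~\ref{4n-gon}.
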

\begin{remark}\label{Heil}
We would like to mention that Heil \cite{H} was the first to  observe that if $\mathbb{X}$ is a real two-dimensional polygonal Banach space such $S_{\mathbb{X}}$ is a regular polygon then $\mathbb{X}$ is a Radon plane if and only if the number of vertices of $ S_{\mathbb{X}} $ is $ 4n +2$ for some natural number $n.$ In this paper, we have provided an alternative geometric approach to prove the same. Furthermore, our results in this direction generalize the relevant observations made by Heil in \cite{H} by taking into consideration all possible convex polygons, and not just regular ones.  
\end{remark}

Continuing in the spirit of Remark \ref{Heil} We next present a family of Radon planes whose unit spheres are polygons, but not \emph{regular} polygons.

\begin{theorem}\label{hexagon}
Let $\mathbb{X}=\mathbb{R}^2$ be endowed with a norm so that $S_{\mathbb{X}}$ is a hexagon with four vertices $\pm(1,\alpha),\pm(1,-\alpha)$, where $ \alpha > 0. $ Then $\mathbb{X}$ is a Radon plane if and only if the other two vertices of $S_{\mathbb{X}}$ are $\pm(0,2\alpha)$ or $\pm(2,0).$
\end{theorem}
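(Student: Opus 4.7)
The plan is to invoke Theorem~\ref{Radon} and reduce the question to verifying TEP. By central symmetry of $S_{\mathbb{X}}$, the sixth pair of vertices has the form $\pm(a,b)$, and convexity of the hexagon forces this pair into one of two essentially distinct positions: Case A, above the rectangle $[-1,1]\times[-\alpha,\alpha]$ (so $b>\alpha$ and $-1<a<1$, with counterclockwise order $(1,-\alpha), (1,\alpha), (a,b), (-1,\alpha), (-1,-\alpha), (-a,-b)$), or Case B, to its right ($a>1$ and $-\alpha<b<\alpha$, with order $(1,-\alpha),(a,b),(1,\alpha),(-1,\alpha),(-a,-b),(-1,-\alpha)$). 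Every other placement reduces to one of these by the $\pm$ symmetry of the pair.

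For the sufficient direction I would verify TEP directly in each of the two configurations named in the statement. For $\pm(0,2\alpha)$, the vertical edge $\overline{(1,-\alpha)(1,\alpha)}$ has extreme supporting functional $x$, whose kernel (the $y$-axis) meets $S_{\mathbb{X}}$ at $\pm(0,2\alpha)$; the slant edge $\overline{(1,\alpha)(0,2\alpha)}$ has supporting functional $(\alpha x+y)/(2\alpha)$, whose kernel $y=-\alpha x$ passes through $\pm(1,-\alpha)$; and the segment $\overline{(0,2\alpha)(-1,\alpha)}$ joining the translated vertices of this pair of adjacent edges is itself the third slant edge of the hexagon, so TEP holds for that pair. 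Central symmetry disposes of the remaining adjacent pairs, and an entirely analogous computation (with the roles of the horizontal edge $\overline{(1,\alpha)(-1,\alpha)}$ and the vertical edge exchanged) verifies TEP for $\pm(2,0)$. Theorem~\ref{Radon} then yields the Radon plane conclusion in both cases.

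For the necessary direction I assume $\mathbb{X}$ is a Radon plane, so TVP holds on every edge. In Case A, TVP applied to $\overline{(1,-\alpha)(1,\alpha)}$ forces the $y$-axis to contain a pair of vertices of $S_{\mathbb{X}}$; since none of the four fixed vertices lie on the $y$-axis, this forces $a=0$. The edge $\overline{(1,\alpha)(0,b)}$ then has supporting functional proportional to $(b-\alpha)x+y$, whose kernel is the line $y=(\alpha-b)x$; tracing the ray from the origin into the fourth quadrant shows that it exits $S_{\mathbb{X}}$ through an interior smooth point of $\overline{(1,-\alpha)(1,\alpha)}$ when $\alpha<b<2\alpha$ and through an interior smooth point of $\overline{(0,-b)(1,-\alpha)}$ when $b>2\alpha$, reaching the vertex $(1,-\alpha)$ precisely when $b=2\alpha$. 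In Case B, TVP applied to $\overline{(1,\alpha)(-1,\alpha)}$ (supporting functional $y/\alpha$) forces the $x$-axis to contain a vertex pair, hence $b=0$; TVP on $\overline{(1,-\alpha)(a,0)}$ (kernel $y=\tfrac{\alpha}{a-1}x$) then places the corresponding first-quadrant ray at an interior smooth point of $\overline{(1,\alpha)(-1,\alpha)}$ when $1<a<2$, at an interior smooth point of $\overline{(a,0)(1,\alpha)}$ when $a>2$, and at the vertex $(1,\alpha)$ exactly when $a=2$. The principal obstacle is the bookkeeping of which edge of $S_{\mathbb{X}}$ the kernel of each supporting functional first meets as a function of the free parameter; once this is handled, the two special configurations $\pm(0,2\alpha)$ and $\pm(2,0)$ drop out as the only possibilities.
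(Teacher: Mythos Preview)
Your proposal is correct, and it differs from the paper's own argument in a meaningful way. You route both directions through Theorem~\ref{Radon}: for sufficiency you verify TEP by computing the extreme supporting functionals and their kernels, and for necessity you extract TVP from the Radon hypothesis and track where the kernel of each edge-functional first meets $S_{\mathbb{X}}$ as a function of the free parameter. The paper, by contrast, does not invoke Theorem~\ref{Radon} at all in this proof: for sufficiency it argues directly from the definition of Birkhoff--James orthogonality, describing the cone of supporting lines at each vertex and checking by hand that $u\perp_B v$ forces $v\perp_B u$; for necessity it uses essentially the same non-smoothness argument you do (the kernel of the vertical-edge functional must hit a vertex, hence $a=0$), but then finishes by imposing the parallelism conditions ``$\overline{v_2v_3}\parallel \overline{v_1(-v_1)}$'' rather than your ray-tracing bookkeeping. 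Your approach is cleaner and showcases Theorem~\ref{Radon} as a working tool; the paper's approach is more self-contained and would survive even if Theorem~\ref{Radon} were removed.

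One small point to tighten: ``central symmetry disposes of the remaining adjacent pairs'' is too quick. Central symmetry reduces the six adjacent pairs to three, not one; you have explicitly handled only the pair at $(1,\alpha)$. The remaining pairs at $(1,-\alpha)$ and $(0,2\alpha)$ do follow, but from the additional reflection symmetry of this particular hexagon about the coordinate axes together with the translated-vertex data you have already computed (e.g.\ the pair at $(0,2\alpha)$ has translated vertices $\pm(1,-\alpha)$ and $\pm(1,\alpha)$, and $\overline{(1,-\alpha)(1,\alpha)}$ is the vertical edge whose translated vertex you already showed is $\pm(0,2\alpha)$). Spelling this out would close the loop.
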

\begin{proof}
Let us first prove the sufficient part of the theorem. Suppose $\mathbb{X}$ is a two-dimensional real Banach space, whose unit sphere $S_{\mathbb{X}}$ is given by the hexagon with the vertices $\pm(1,\alpha),\pm(1,-\alpha),\pm(0,2\alpha)$. The scenario is illustrated in Figure $ 4. $ Let us first denote the edge between the vertices $(1,\alpha)$ and $ (1,-\alpha) $ by $L_1$. Then $-L_1$ is also an edge of $S_{\mathbb{X}}$ between the vertices $(-1,-\alpha)$ and $(-1,\alpha)$. Since any point on $L_1\bigcup (-L_1),$ except the extreme points $(1,\alpha), (1,-\alpha),(-1,-\alpha)$ and $(-1,\alpha),$ is smooth, the supporting line to $ B_{\mathbb{X}} $ at any point on $L_1$ and $-L_1,$ except the four vertices,  is the straight line containing $L_1$ and $-L_1$, respectively. Moreover, for the two vertices on $ L_1 (-L_1), $ the straight line containing $ L_1 (-L_1) $ is one of the supporting lines to $ B_{\mathbb{X}} $ at the respective vertex. Now, let us consider a straight line through origin, which is parallel to $L_1$ and $-L_1$. This straight line will meet $S_{\mathbb{X}}$ at $(0,2\alpha)$ and $(0,-2\alpha)$. Therefore, $u\perp_B \pm(0,2\alpha)$ for all $u\in L_1\bigcup (-L_1)$. \\

\begin{figure}[ht]
\centering 
\includegraphics[width=0.4\linewidth]{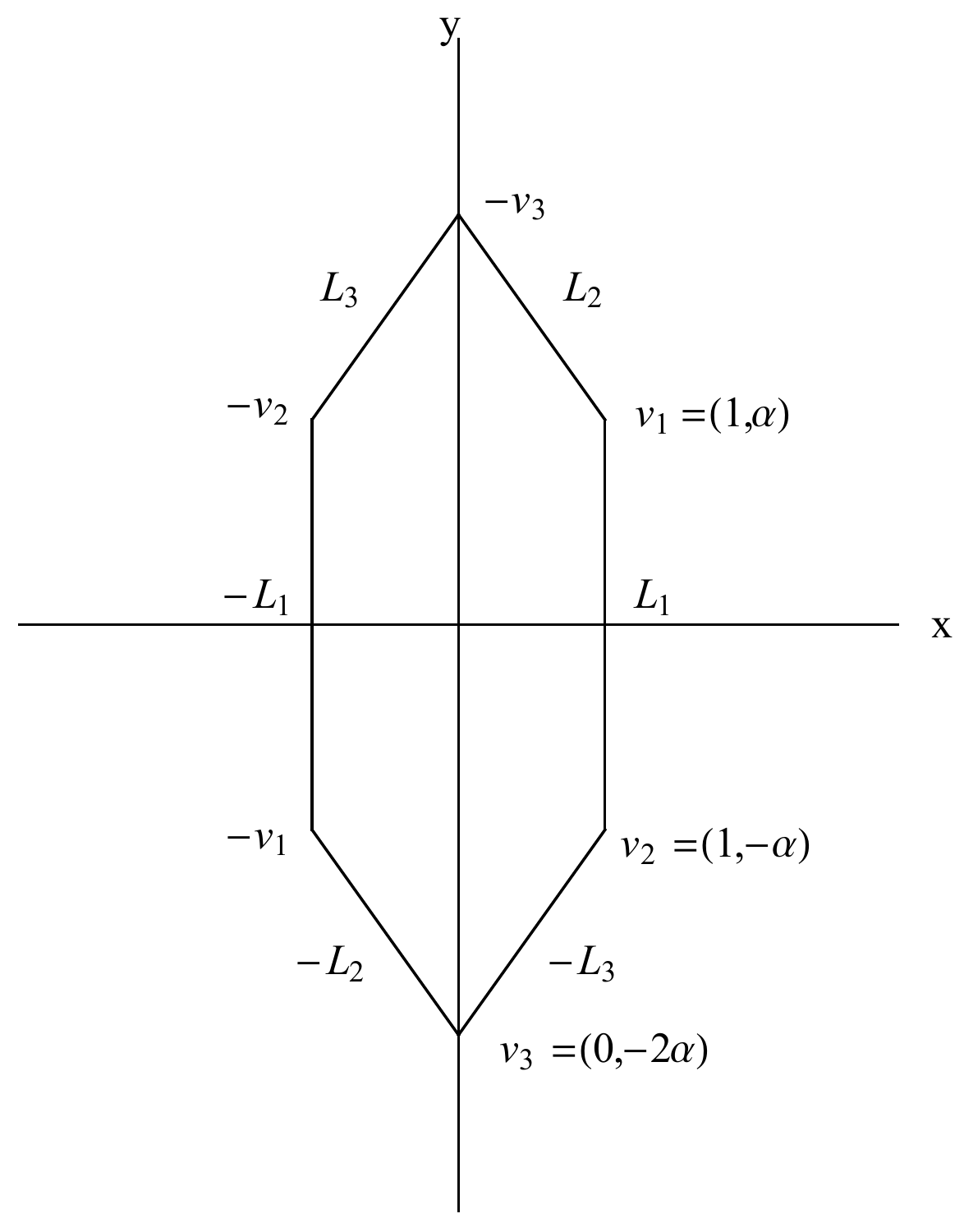}
\caption{}
\label{Figure 4}
\end{figure}

Next, let us consider the edge between the vertices $(1,\alpha)$ and $(0,2\alpha)$ and the edge between the vertices $(0,2\alpha)$ and $(-1,\alpha)$. Let us denote these edges by $L_2$ and $L_3$, respectively. Then $-L_2$ is an edge between the vertices $(-1,-\alpha)$ and $(0,-2\alpha)$ and $-L_3$ is an edge between the vertices $(0,-2\alpha)$ and $ (1,-\alpha)$. It is easy to verify that the following statements hold:\\

\noindent (1) $L_2$, $-L_2$ is parallel to the straight line joining the two points $(-1,\alpha),(1,-\alpha)$. \\
(2) $L_3,-L_3$ is parallel to the straight line joining the two points $(1,\alpha),(-1,-\alpha)$. \\

Now, since $(0,2\alpha)$ is an extreme point of $B_{\mathbb{X}}$, it follows from Proposition \ref{prop} that $(0,2\alpha)$ is non-smooth point. So there are infinitely many supporting lines to $B_{\mathbb{X}}$ at $(0,2\alpha)$. We observe that any supporting line to $B_{\mathbb{X}}$ at $(0,2\alpha)$ entirely lies inside the cone generated by the following two straight lines:\\

\noindent (i) The straight line containing the edge $L_2$.\\
(ii) The straight line containing the edge$L_3$.\\
 
Therefore, the straight line through the origin which is parallel to any supporting line to $B_{\mathbb{X}}$ at $(0,2\alpha)$ lies entirely inside the cone generated by the two straight lines containing the points $(-1,\alpha),(1,-\alpha)$ and $(1,\alpha),(-1,-\alpha)$. 
Similarly, we can show that the straight line through the origin which is parallel to any supporting line to $ B_{\mathbb{X}} $ at $(0,-2\alpha)$ lies entirely inside the cone generated by the two straight lines containing the points $(-1,\alpha),(1,-\alpha)$ and $(1,\alpha),(-1,-\alpha)$. Therefore, $\pm(0,2\alpha)\perp_B u$ for all $u\in L_1\bigcup (-L_1)$.\\
 
By using similar arguments, we can show that the following statements hold true:\\

\noindent (a) $v\perp_B \pm(-1,\alpha)$ for all $v\in L_2 \bigcup (-L_2)$ and $\pm(-1,\alpha)\perp_B v$ for all $v\in L_2 \bigcup (-L_2)$,\\
(b) $w\perp_B \pm(1,\alpha)$ for all $w\in L_3\bigcup (-L_3)$ and $\pm(1,\alpha)\perp_B w$ for all $w\in L_3 \bigcup (-L_3)$.\\

Therefore, using the homogeneity property of Birkhoff-James orthogonality, it follows that Birkhoff-James orthogonality is symmetric in $ \mathbb{X}. $ In other words, $ \mathbb{X} $ is a Radon plane.We would like to note that the proof can be completed using similar arguments if the other two vertices of $ S_{\mathbb{X}} $ are $ \pm(2,0). $This completes the proof of the sufficient part.\\

Let us now prove the necessary part of the theorem. Let $\mathbb{X}$ be a two-dimensional real Banach space such that $S_{\mathbb{X}}$ is a hexagon with two vertices $v_1=(1,\alpha),v_2=(1,-\alpha)$. Then $-v_1=(-1,-\alpha),-v_2=(-1,\alpha)$ are also vertices of $S_{\mathbb{X}}$. Suppose $ \mathbb{X} $ is a Radon plane. Now, one of the following two cases must hold.\\

\noindent (1) The $ X $-axis contains no vertices of $ S_{\mathbb{X}}. $\\
(2) The $ X $-axis contains two vertices of $ S_{\mathbb{X}}. $\\
 
First we assume that there are no vertices of $S_{\mathbb{X}}$ on the $X$-axis. 
Let us denote the edge between the vertices $v_1$ and $v_2$ by $L$. Then $-L$ is the edge between the verices $-v_1$ and $-v_2$. Now, it is clear that $X$-axis meets $L$ at the point $(1,0)$ and $-L$ at the point $(-1,0)$, which are smooth point of $S_{\mathbb{X}}$.  $Y$-axis meets $S_{\mathbb{X}}$ at two points $ \pm v = \pm (0, \beta), $ where $ \beta \in \mathbb{R}. $ It is easy to observe that given any $ y \in L, $ we have, $ y \perp_B v. $ Since Birkhoff-James orthogonality is symmetric in $ \mathbb{X} $, we must have that $ v \perp_B y $ for any $ y \in L. $  This shows that Birkhoff-James orthogonality is not right unique at $v$ and so  it follows from \cite[Th. 4.1]{Ja} that $ v $ must be a non-smooth point of $ \mathbb{X}. $ Therefore, the other two vertices of $S_{\mathbb{X}}$ is of the form $\pm(0,\beta)$. We denote the vertex $(0,-\beta)$ by $v_3$. Then the vertex $(0,\beta)$ is $-v_3$. Again, by the symmetry of Birkhoff-James orthogonality in $ \mathbb{X}, $ we must have that the edges  $\overline{v_2v_3}$ and $\overline {v_3(-v_1)}$ are parallel to $\overline{v_1(-v_1)}$ and $ \overline {v_2(-v_2)}$, respectively. It is easy to see that this is true only when $\beta=2\alpha$.
Now, we assume that $X$-axis contains two vertices of $S_{\mathbb{X}}.$ Then the other two vertices of $S_{\mathbb{X}}$ is of the form $w=\pm(\eta, 0)$, where $\eta \in \mathbb{X}$. The corresponding picture of the unit ball is illustrated in Figure $ 5. $ The rest of the proof of the fact that $ \eta=2 $ in this case can be completed similarly as above.
 
\begin{figure}[h]
\centering 
\includegraphics[width=0.4\textwidth]{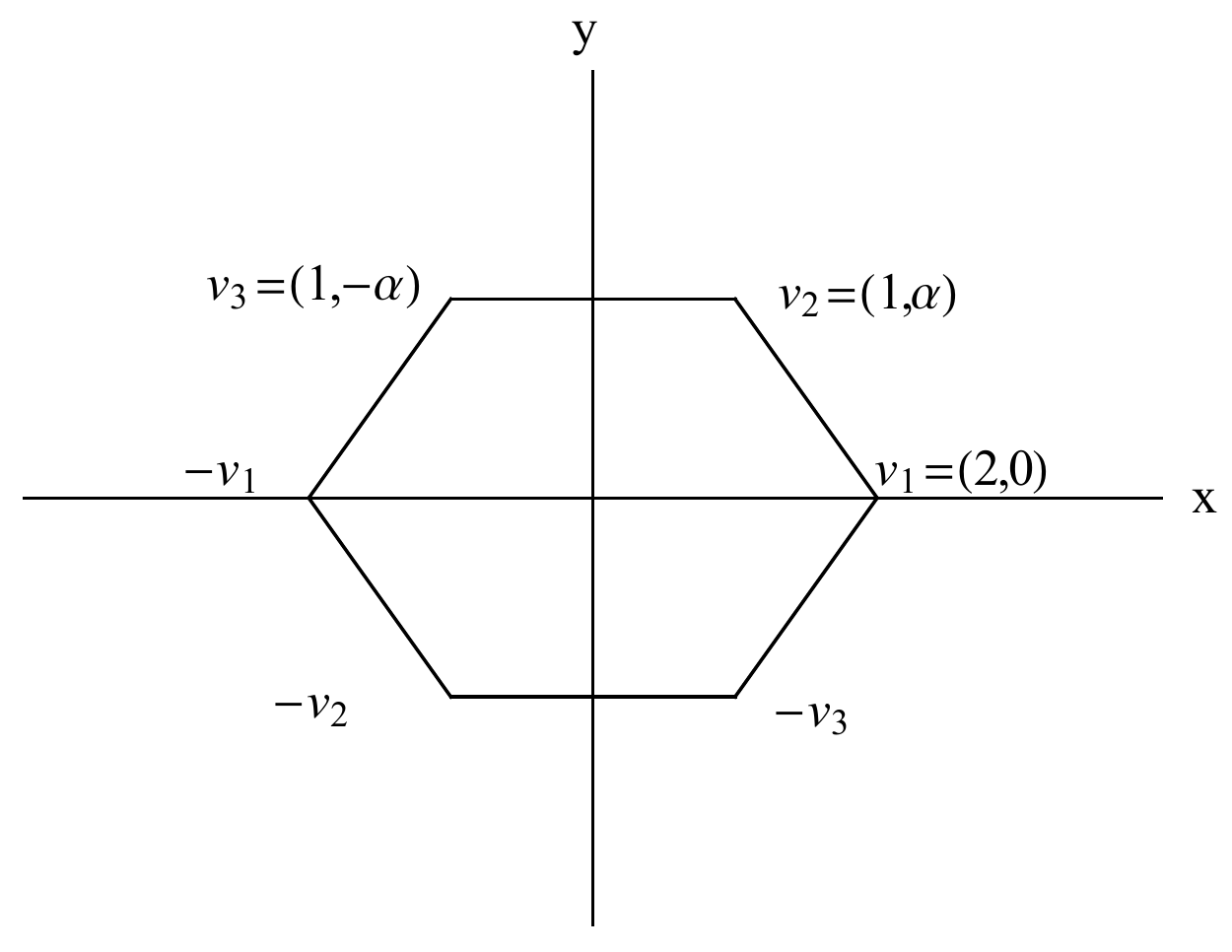}
\caption{}
\label{Figure 5}
\end{figure}

This completes the proof of the necessary part of the theorem and establishes the theorem completely.     
\end{proof}

\begin{remark}
If the vertices of $ S_{\mathbb{X}} $ in the above theorem are $\pm(1,\alpha), \pm(1,-\alpha)$ and $\pm(0,2\alpha)$ then $\mathbb{X}$ is a Radon plane with regular polygonal unit ball if and only if $\alpha= \frac{1}{\sqrt {3}}$. On the other hand, if the vertices of $ S_{\mathbb{X}}$ are $\pm(1,\alpha), \pm(1,-\alpha)$ and $\pm(2,0)$ then $\mathbb{X}$ is a Radon plane  with regular polygonal unit ball if and only if $\alpha= \sqrt{3}$. In all other cases, $ \mathbb{X} $ is a Radon plane whose unit ball is an irregular polygon.
\end{remark}
\textbf{Conclusion:}
In this paper we have obtained a complete geometric characterization of polygonal Radon planes. Furthermore, we have given concrete examples of Radon planes for which the unit spheres are polygons, but not regular polygons. Motivated by these findings, we end the present paper with the following:\\

\textbf{Open question:} \textit{Given any $ n \in \mathbb{N}, $ does there exist a Radon plane $ \mathbb{X} $ such that $ S_{\mathbb{X}} $ is an irregular polygon with $ 4n+2 $ vertices? If the answer to this question is in the affirmative, then find a generalized algorithm to construct Radon planes whose unit spheres are irregular polygons with $ 4n+2 $ vertices.}\\

\end{document}